\newcommand{\halb}{\frac{1}{2}}
\newtheorem{rmrk}{Remark}
\newtheorem*{rmrk*}{Remark}
\newtheorem*{definition*}{Definition}
\newtheorem*{thm*}{Theorem}
\newtheorem*{prp*}{Proposition}
\newcommand{\nrm}[1]{ \left\vert\left\vert #1 \right\vert\right\vert }
\renewcommand{\div}[1]{\mathrm{div}\left( #1 \right)}
\newcommand{\pd}[2]{\frac{\partial #1}{\partial #2}}
\newcommand{\pdt}[1]{\frac{\partial #1}{\partial t}}
\newcommand{\pdv}[2]{\frac{\delta #1}{\delta #2}}
\newcommand{\prn}[1]{\left( #1 \right)}
\newcommand{\q}{\mathbf{q}}
\newcommand{\p}{\mathbf{p}}
\newcommand{\grad}[1]{\nabla\left( #1 \right)}
\newcommand{\dx}{\Delta x}
\newcommand{\dy}{\Delta y}
\newcommand{\dt}{\Delta t}
\newcommand{\x}{\mathbf{x}}
\newcommand{\calD}{\mathcal{D}}
\newcommand{\calG}{\mathcal{G}}
\newcommand{\calF}{\mathcal{F}}
\newcommand{\calL}{\mathcal{L}}
\newcommand{\chii}{\raisebox{0.4ex}{$\chi$}}
\renewcommand{\j}{\mathbf{J}} 
\title{A first-order hyperbolic reformulation of the Cahn-Hilliard equation}
\author{
	\href{https://orcid.org/0000-0002-7150-3313}{\includegraphics[scale=0.06]{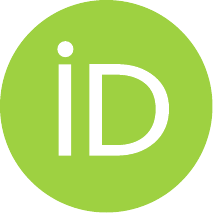}\hspace{1mm}Firas Dhaouadi}
	\thanks{corresponding author.\newline Email addresses: \texttt{firas.dhaouadi@unitn.it} (F. Dhaouadi),  \texttt{michael.dumbser@unitn.it} (M. Dumbser),
		\texttt{sergey.gavrilyuk@univ-amu.fr} (S. Gavrilyuk)} $^{\;a}$, 
	\hspace{3mm}
	\href{https://orcid.org/0000-0002-8201-8372}{\includegraphics[scale=0.06]{orcid.pdf}\hspace{1mm}Michael Dumbser$^a$}
	\hspace{3mm}
	\href{https://orcid.org/0000-0003-4605-8104}{\includegraphics[scale=0.06]{orcid.pdf}\hspace{1mm}Sergey Gavrilyuk$^b$} 
}
\affiliation{$^a$ Department of Civil, Environmental and Mechanical Engineering, University of Trento, Via Mesiano 77, 38123 Trento, Italy. \\ 
	$^b$Aix-Marseille University and CNRS UMR 7343 IUSTI, 5 rue Enrico Fermi, 13453 Marseille, France.
}
\date{}
\begin{document}
	\maketitle

	\begin{abstract}
		In this paper we present a new first-order hyperbolic reformulation of the Cahn-Hilliard equation. The model is obtained from the combination of augmented Lagrangian techniques proposed earlier by the  authors of this paper, with a classical Cattaneo-type relaxation that allows to reformulate diffusion equations as augmented first order hyperbolic systems with stiff relaxation source terms. The proposed system is proven to be hyperbolic and to admit a Lyapunov functional, in accordance with the original equations. A new numerical scheme is proposed to  solve the original Cahn-Hilliard equations based on conservative semi-implicit finite differences, while the hyperbolic system was numerically solved by means of a classical second order MUSCL-Hancock-type finite volume scheme. The proposed approach is validated through a set of classical benchmarks such as spinodal decomposition, Ostwald ripening and exact stationary solutions. 
	\end{abstract}
	
	\keywords{Cahn-Hilliard equation \and First order hyperbolic systems \and Relaxation models \and augmented Lagrangian approach \and Finite Volume schemes}
	
	\section{Introduction}
	The Cahn-Hilliard equation classically writes as
	\begin{equation}
		\pdt{c} = \Delta \prn{\frac{dg}{dc}-\gamma \Delta c},
		\label{eq:CH1}
	\end{equation}
where the variable $c$ is the order parameter indicating the phases. $\gamma$ is a small parameter such that $\sqrt{\gamma}$ is a characteristic length (thickness) of the transition zone from one phase to the other. The function $g(c)$ is a double-well potential whose exact expression will be provided later, and will be kept in a generic form. Equation \eqref{eq:CH1} was proposed in 1958 by John W. Cahn and John E. Hilliard in their seminal paper \cite{cahn1958free} as a model to describe nucleation in binary mixtures. The equation, in its different versions, has since then been used as a canonical model for two-phase flows \cite{gurtin1996two,anderson1998diffuse}, able to describe phase ordering \cite{bray2002} and phase separation processes such as spinodal decomposition \cite{cahn1961spinodal} and Ostwald ripening \cite{chakrabarti1993late}, with applications in different fields such as mechanics of living tissues \cite{wise2008three,agosti2017cahn}, coarsening in binary alloys \cite{cahn1961spinodal,dreyer2000study}, astronomy \cite{tremaine2003origin}, etc.
The Cahn-Hilliard equation \eqref{eq:CH1} can be cast into a conservation-law form which writes 
	\begin{equation}
		\pdt{c} + \div{\j} = 0,
		\label{eq:CH_J}
	\end{equation}
	where the mass flux $\j$ is assumed to obey a generalized Fick's law such that 
	\begin{equation*}
		\j = -\nabla \mu,
	\end{equation*}
	and $\mu$ is the chemical potential of the system given by
		\begin{equation*}
		\mu = \pdv{f}{c} = \pd{f}{c} - \div{\pd{f}{\nabla c}} = \frac{dg}{dc} - \gamma \Delta c,
	\end{equation*}
where
	\begin{equation*}
	f(c,\nabla c) = g(c) + \frac{\gamma}{2}\nrm{\nabla c}^2,
\end{equation*}
 is a double-well potential regularized by a gradient term. The latter was proposed by Cahn and Hilliard \cite{cahn1958free} and allows to stabilize the system in the spinodal region, \textit{i.e.} where the potential is non-convex, resulting in backward diffusion. In this context, the model bears similarities to Euler-Korteweg-Van der Waals systems \cite{vdwaals,van1979thermodynamic} for two-phase flows in the sense that both use a double-well potential, regularized by gradient terms of the order parameter. Further connections between both models were drawn in \cite{rohde2020homogenization}.
 Equation \eqref{eq:CH1} admits the Lyapunov functional
	\begin{equation*}
		F(c,\nabla c)=  \int_\calD f(c,\nabla c)   \  d D, 
	\end{equation*}
where $\cal D$ is a space domain.
Indeed, multiplying both sides of \eqref{eq:CH_J} by $\delta f/\delta c$, allows to obtain the dissipation law
	\begin{equation*}
		\pdt{f} + \div{\mu J} = -\nrm{\nabla \mu}^2,
	\end{equation*} 
	which in integral form writes 
	\begin{equation*}
		\frac{dF}{dt} =  -\int_\calD \nrm{\nabla \mu}^2   \  d D \leq 0.
	\end{equation*}
Several explicit forms of $g(c)$ are proposed in the literature, the most popular being
\begin{equation}
	g(c) = \frac{(c^2-1)^2}{4},
	\label{eq:g(c)}
\end{equation}
and which will be used later, in the numerical computations. Under this form, $c$ is expected to remain within the physically admissible region $[-1,1]$, where the limit values characterize pure phases (for example $c=-1$ for phase 1 and $c=1$ for phase 2). Note however that, as pointed out in the monograph \cite{miranville2019cahn} (precisely \textit{Remark 4.10}), the Cahn-Hilliard equation with the specific double-well potential \eqref{eq:g(c)} does not satisfy  the maximum principle. Initial data within the interval $[-1,1]$, can lead to solutions extending slightly beyond this range (see the counter-example given in \cite{pierre2011etude}).  This is why the simplified double-well potential $g(c)$  is often replaced by more general forms \cite{miranville2019cahn}. Nevertheless, the cubic approximation  $\displaystyle\frac{dg}{dc}= c^3-c$ remains of practical interest, and solutions usually remain within the physical range, since the equilibrium points $c=\pm 1$ are asymptotically stable. 
Well-posed problem formulations for the Cahn-Hilliard equation can be found, for example, in  \cite{miranville2019cahn,elliott1986cahn}.	

Several other variants of the Cahn-Hilliard equations have been proposed in the literature, out of which the most known is the singularly perturbed Cahn-Hilliard equation, also referred to as Cahn-Hilliard equation with inertial term and which writes \cite{galenko2001phase}
\begin{equation}
	\beta \pd{^2 c}{t^2} +   \pdt{c} = \Delta \prn{\frac{dg}{dc}-\gamma \Delta c}, \qquad 0 < \beta \ll 1.
	\label{eq:CH_inertia}
\end{equation}
The mathematical properties of \eqref{eq:CH_inertia} are studied, for example, in \cite{debussche1991singular,gatti2005hyperbolic,grasselli20092d,bonfoh2010singularly}. In particular in \cite{debussche1991singular}, the existence of Lyapunov functionals was established for a particular class of solutions. One of the main flaws of such a formulation is the loss of mass conservation. The Cahn-Hilliard equation can also be coupled with continuum motion  \cite{lowengrub1998quasi,mulet2024implicit}.

From the numerical point of view, the Cahn-Hilliard equation presents several difficulties to account for, which spurred numerous contributions in the last decades, see for instance \cite{elliott1987numerical,du1991numerical,feng2004error,shen2010numerical,antonietti2016c, chen2016convergence} . First, since it is fourth order in space, an explicit scheme would require extremely small time steps that scale with the fourth power of the mesh size \cite{rogers1988numerical,vollmayr2003fast}, thereby favoring implicit strategies in a natural manner. Second, the change of convexity in the double-well potential also requires careful treatment and a typical way to address it is a convex-nonconvex splitting, see for example \cite{eyre1998unconditionally, vollmayr2003fast, lee2024operator, mulet2024implicit}. 

In this paper, we propose a new first order hyperbolic reformulation of the Cahn-Hilliard equation \eqref{eq:CH1}, by means of an augmented first-order hyperbolic relaxation system of equations for the following reasons. First, given the numerical challenges to overcome for solving Equation \eqref{eq:CH1}, it seems of practical interest to provide an alternative formulation that can be numerically solved in a straightforward manner by using standard techniques for hyperbolic relaxation systems without much additional efforts and, in particular, without prohibitively small time step sizes. Second, the Cahn-Hilliard Equation \eqref{eq:CH1} violates the principle of causality, similarly to the heat equation, due to the Laplace operators, whereas a hyperbolic system provides an alternative and causal description based on bounded signal speeds. 

The paper is organized as follows. In section \ref{sec:hyperbolic}, the relaxation procedure allowing to obtain a first-order hyperbolic approximation of the Cahn-Hilliard equation \eqref{eq:CH1} is detailed and the obtained system is analyzed. The existence of a Lyapunov functional for the latter is demonstrated and the hyperbolicity is proven. In section \ref{sec:numeric}, a numerical scheme based on implicit fourth order conservative finite-differences is proposed to solve Equation \eqref{eq:CH1} in order to provide reliable reference solutions. Then, a finite-volume (FV) scheme based on the well-known second order accurate MUSCL-Hancock method for solving the proposed hyperbolic approximation is recalled. In section \ref{sec:results}, numerical results for the hyperbolic approximation of \eqref{eq:CH1} are provided and thoroughly analyzed and compared against available exact or numerical reference solutions. In particular, the effects of the relaxation parameters are carefully investigated via these numerical results. A summary of the main contributions and the possible future extensions of this work are given in section \ref{sec:conclusion}.

	\section{Hyperbolic approximation of the Cahn-Hilliard equation}
	\label{sec:hyperbolic}
	\subsection{Second-order approximation}
	The objective of this work is to formulate a first-order hyperbolic system approximating the fourth-order PDE \eqref{eq:CH1}. Note that in the latter, the fourth order term is only due to the fact that $f$ explicitly depends of $\nabla c$, resulting in a Laplace operator appearing in the chemical potential $\mu$. Therefore, the first step consists in separating the gradient term and the concentration $c$ in the free energy, which would allow us reduce the chemical potential $\mu$ to an algebraic expression of $c$. To this end, let us introduce the following action functional, where an additional integration over a finite time interval $\cal I$ is introduced :
	\begin{equation*}
		a = \int_{\cal I}\int_\calD   \calL \ d D dt, \quad \calL\prn{c,\varphi,\nabla \varphi,\pdt{\varphi}} = -g(c) - \frac{\gamma}{2}\nrm{\nabla\varphi}^2 - \frac{\alpha}{2}(c-\varphi)^2 + \frac{\beta}{2} \prn{\pdt{\varphi}}^2.
	\end{equation*}
	In this expression, $\varphi$ is a new variable substituting $c$ as the order parameter distinguishing the phases. As a matter of fact, $\nabla c$ is replaced by $\nabla \varphi$. In order to ensure consistency with the original system, a penalty term is introduced enforcing that $(c-\varphi)$ vanishes in the limit $\alpha\rightarrow+\infty$. The last term is introduced as an additional regularization of the action functional, with a small parameter $\beta$ . These steps and considerations together form what we call an augmented Lagrangian approach, which so far was applied to build hyperbolic approximations only to dispersive systems of equations, see for example \cite{favrie2017rapid,dhaouadi2019extended,bourgeois2020dynamics,dhaouadi2020augmented,busto2021high,dhaouadi2022films,dhaouadi2022NSK,gavrilyuk2022hyperbolic,gavrilyuk2024conduit}. Now considering the action $a$ and the corresponding Lagrangian $\calL$ we proceed as follows. Likewise to the original Cahn-Hilliard equation, we write the generalized Fick's law, which now becomes
	\begin{equation}
		\pdt{c} -\Delta \mu = 0, \quad \mu = -\pdv{\calL}{c} = -\pd{\calL}{c} = \frac{dg}{dc} + \alpha(c-\varphi),
		\label{eq:eq_with_pot}
	\end{equation}
	and which is thus completely independent of high-order terms. The time-evolution of the variable $\varphi$ is governed by the Euler-Lagrange equation corresponding to the variations of the action functional $a$ with respect to the latter and which writes
	\begin{equation*}
		\pdt{} \prn{\pd{\calL}{\varphi_t}}+ \div{\pd{\calL}{\nabla\varphi}} = \pd{\calL}{\varphi}.
	\end{equation*}
	Thus, one obtains the following equations
	\begin{subequations}
		\begin{align}
			&\pdt{c} - \div{\nabla\prn{\frac{dg}{dc} + \alpha(c-\varphi)}} = 0, \label{eq:2ndOrderCH_1}\\
			\beta &\pd{^2\varphi}{t^2}- \div{\gamma \nabla \varphi} = \alpha(c-\varphi),
			\label{eq:2ndOrderCH_2}
		\end{align}
		\label{eq:2ndOrderCH}
	\end{subequations}
	which form a system of second-order PDEs, approximating the Cahn-Hilliard equation \eqref{eq:CH1}. One can further highlight the connection to Equation \eqref{eq:CH1} by inserting the expression of $\alpha(c-\varphi)$ from \eqref{eq:2ndOrderCH_2} into \eqref{eq:2ndOrderCH_1} in order to obtain 
	\begin{equation*}
		\pdt{c} = \Delta\prn{\frac{dg}{dc} - \div{\gamma \nabla \varphi} + \beta \pd{^2\varphi}{t^2}},
	\end{equation*} 
	which is consistent with Equation \eqref{eq:CH1} for $\beta\rightarrow0$ and $\varphi\rightarrow c$.
	\subsection{First-order hyperbolic approximation}
	Starting from the second order approximation \eqref{eq:2ndOrderCH}, the aim now is to attain a first-order hyperbolic structure. We begin by addressing Equation \eqref{eq:2ndOrderCH_2} since it simply is a linear wave equation in $\varphi$ with a non-zero right-hand side. In this case we only need a proper change of variables that casts the latter into an augmented first-order system. We follow the idea proposed in  \cite{dhaouadi2019extended,dhaouadi2022films} and we promote the time derivative and the gradient of $\varphi$ to new independent variables $w$ and $\p$, respectively,  so that
	\begin{subequations}
		\begin{align}
			&w = \beta \pdt{\varphi}, \label{eq:w} \\ 
			&\p = \nabla \varphi. \label{eq:def_p}
		\end{align}
	\end{subequations}
	This new set of variables allows to recast Equation \eqref{eq:2ndOrderCH_2} into first order form, but it also provides a time-evolution equation for $\varphi$ that is Equation \eqref{eq:w}. Besides, one can also recover a time-evolution equation for $\p$ by differentiating Equation \eqref{eq:p} with respect to time and using \eqref{eq:w} to obtain 
	\begin{equation}
		\pdt{\p} - \frac{1}{\beta} \nabla w = 0.
		\label{eq:p}
	\end{equation}
	Thus, under these notations, one can convert Equation \eqref{eq:2ndOrderCH_2} into the following augmented first-order system 
		\begin{align*}
			& \pdt{w}  - \div{\gamma\p}  =-\alpha(\varphi-c), \\
			& \pdt{\p} -\frac{1}{\beta}\nabla w = 0, \\
			& \pdt{\varphi} = \frac{1}{\beta}w.
		\end{align*} 
	\begin{rmrk}
		For this change of variables to be truly consistent with Equation \eqref{eq:2ndOrderCH_2}, it is necessary that initial/boundary conditions be assigned properly. Indeed, Equation \eqref{eq:p} by itself cannot substitute the definition \eqref{eq:def_p}. One needs at least to impose also 
		\begin{equation*}
			\p(\x,t=0) = \nabla\varphi(\x,t=0), \quad \p(\x,t)\vert_{\partial \calD} = \nabla\varphi(\x,t)\vert_{\partial \calD},
		\end{equation*}
		which otherwise, may result in significant discrepancies in the solutions. 
	\end{rmrk}
	\begin{rmrk}
		By virtue of Equation \eqref{eq:p} and taking into account the previous remark, $\p$ satisfies a curl-involution constraint 
		\begin{equation*}
			\nabla \times \p = 0, \quad \forall t\geq0. 	
		\end{equation*} 
	\end{rmrk}
	It remains to address the PDE \eqref{eq:2ndOrderCH_1}. We propose here 
	a classical Maxwell-Cattaneo-type relaxation approach. We start from \eqref{eq:eq_with_pot}, and we introduce an independent vector variable $\q$ and a constant relaxation time $\tau$ such that $\q/\tau$ is the new mass flux. A second equation allows to relax $\q/\tau$ towards $-\nabla\mu$. The system is formulated as follows
		\begin{align*}
			& \pdt{c} + \div{\frac{1}{\tau}\q} = 0,\\
			& \pdt{\q}+ \nabla \mu  = -\frac{1}{\tau}\q.
		\end{align*} 
	Regrouping the whole first-order approximation of the Cahn-Hilliard equation allows to obtain the following final first-order system
	\begin{subequations}
		\begin{align}
			& \pdt{c} + \div{\frac{1}{\tau}\q} = 0,\\
			& \pdt{\q}+ \grad{\frac{dg}{dc}+\alpha(c-\varphi)}  = -\frac{1}{\tau}\q, \\
			& \pdt{w}  - \div{\gamma\p}  =-\alpha(\varphi-c), \\
			& \pdt{\p} -\frac{1}{\beta}\nabla w = 0, \\
			& \pdt{\varphi} = \frac{1}{\beta}w.
		\end{align} 
		\label{eq:CH_Hyp}
	\end{subequations}
	In what follows, we will refer to this system as the first-order hyperbolic reformulation of the original Cahn-Hilliard equation for the case when $\tau \to 0$, $\beta \to 0$ and $\alpha \to \infty$. 
	\begin{rmrk}
	Obviously, instead of using $\q$ as a main variable, one can use the variable $\q' = \q/\tau$ in order to simplify the flux and the relaxation source term. However, this will introduce a singular matrix ahead of time derivatives. 
	\end{rmrk}
	\begin{rmrk} 
	For the following scaling of the relaxation parameters
	\begin{equation*}
			\alpha = \gamma^{-1}, \quad \tau = \gamma^2, \quad \beta = \gamma^2,
	\end{equation*}
	we can prove via formal asymptotic expansions that the augmented system of equations \eqref{eq:CH_Hyp} is formally asymptotically consistent with the original Cahn-Hilliard equation \eqref{eq:CH1} for $\gamma \ll 1$. Indeed, similarly to what is done in \cite{hitz2020parabolic,dhaouadi2024eulerian},  if we assume that the following expansions hold 
	\begin{alignat*}{4}
	&c = c_0 &&+ \gamma \, c_1  &&+ \gamma^2\, c_2 &&+ O(\gamma^3), \\
	&\varphi = \varphi_0 &&+\gamma\, \varphi_1 &&+\gamma^2\, \varphi_2&&+ O(\gamma^3), \\
	&\q = \q_0 &&+ \gamma\, \q_1 &&+ \gamma^2\, \q_2 &&+ O(\gamma^3), \\
	&w = w_0 &&+ \gamma\, w_1 &&+ \gamma^2\, w_2 &&+ O(\gamma^3), \\
	&\p = \p_0 &&+\gamma\, \p_1 &&+\gamma^2\, \p_2 &&+ O(\gamma^3),
	\end{alignat*}
  then, by inserting these expansions into equations \eqref{eq:CH_Hyp}, sorting out the leading order terms of the powers of $\gamma$, one can obtain 
		\begin{equation*}
			\pdt{c_0} = \Delta\prn{ \frac{dg}{dc}(c_0) - \gamma\Delta c_0  } + O(\gamma^2),
		\end{equation*}
	\end{rmrk}
which is equivalent to Equation \eqref{eq:CH1} for $\gamma \ll 1$. 
	\subsection{Lyapunov functional}
	\begin{prp*}
		The system of equations \eqref{eq:CH_Hyp} admits the following Lyapunov functional
		\begin{equation*}
			E = \int_\calD  e(c,\varphi,\q,\p,w)  \ d D, \quad  e(c,\varphi,\p,w) = g(c) + \frac{\gamma}{2}\nrm{\p}^2 + \frac{\alpha}{2}(c-\varphi)^2 + \frac{1}{2\beta} w^2 + \frac{1}{2\tau} \nrm{\q}^2.
		\end{equation*}
	\end{prp*}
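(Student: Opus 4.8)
The plan is to derive a pointwise dissipation identity of the form $\pdt{e} + \div{\mathbf{F}} = -\tfrac{1}{\tau^2}\nrm{\q}^2$ for an appropriate flux $\mathbf{F}$ built only from the unknowns of \eqref{eq:CH_Hyp}, and then to integrate it over $\calD$, discarding the boundary flux under periodic or no-flux boundary conditions (consistent with the conditions on $\p$ discussed in the remarks above), so as to conclude $dE/dt\leq 0$. This is the hyperbolic counterpart of the argument recalled in the introduction, where multiplying \eqref{eq:CH_J} by $\delta f/\delta c$ produced the dissipation law for $F$.

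First I would differentiate $e$ in time and expand by the chain rule,
\begin{equation*}
\pdt{e} = \frac{dg}{dc}\,\pdt{c} + \gamma\,\p\cdot\pdt{\p} + \alpha(c-\varphi)\prn{\pdt{c}-\pdt{\varphi}} + \frac{1}{\beta}\,w\,\pdt{w} + \frac{1}{\tau}\,\q\cdot\pdt{\q},
\end{equation*}
and substitute each time derivative from the five equations of \eqref{eq:CH_Hyp}, recognizing $\mu=\frac{dg}{dc}+\alpha(c-\varphi)$ as in \eqref{eq:eq_with_pot}. The two contributions proportional to $\div{\q}$ (arising through $\pdt{c}$ in the first and third terms) combine into $-\tfrac{1}{\tau}\mu\,\div{\q}$; together with the term $-\tfrac{1}{\tau}\q\cdot\nabla\mu$ coming from $\tfrac{1}{\tau}\q\cdot\pdt{\q}$ they assemble into $-\tfrac{1}{\tau}\div{\mu\q}$, while the relaxation part of the $\q$-equation leaves the non-positive remainder $-\tfrac{1}{\tau^2}\nrm{\q}^2$.

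Next I would collect the terms carrying $w$ and $\p$. The cross-terms $\tfrac{1}{\beta}\alpha(c-\varphi)w$ generated by the $\varphi$- and $w$-equations cancel identically, and the pair $\tfrac{\gamma}{\beta}\,\p\cdot\nabla w$ and $\tfrac{\gamma}{\beta}\,w\,\div{\p}$ combines into $\tfrac{\gamma}{\beta}\div{w\p}$. Altogether this yields
\begin{equation*}
\pdt{e} + \div{\frac{1}{\tau}\,\mu\,\q - \frac{\gamma}{\beta}\,w\,\p} = -\frac{1}{\tau^2}\,\nrm{\q}^2 \leq 0,
\end{equation*}
and integrating over $\calD$ gives $\dfrac{dE}{dt} = -\dfrac{1}{\tau^2}\displaystyle\int_\calD \nrm{\q}^2\,d D \leq 0$.

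The computation is essentially bookkeeping. The points that require care are the correct grouping of the $\div{\q}$-terms through the identification of $\mu$, the exact cancellation of the $\alpha(c-\varphi)w$ cross-terms between the $\varphi$- and $w$-equations, and the choice of boundary conditions — in line with the remarks above on the data for $\p$ — under which the flux term vanishes upon integration. I do not anticipate a genuine obstacle; the only conceptual input, the form of $e$ itself, is already dictated by the Lagrangian $\calL$ (potential plus quadratic penalty and regularization terms) augmented by the Cattaneo-type kinetic term $\tfrac{1}{2\tau}\nrm{\q}^2$.
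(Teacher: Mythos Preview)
Your proposal is correct and follows essentially the same route as the paper: both compute $\pdt{e}$ by multiplying each equation of \eqref{eq:CH_Hyp} by the corresponding partial derivative of $e$ and summing, arriving at the identical local dissipation identity (the paper writes it abstractly as $\pdt{e}+\div{\pd{e}{c}\pd{e}{\q}-\pd{e}{\p}\pd{e}{w}}=-\nrm{\pd{e}{\q}}^2$, which is exactly your $\pdt{e}+\div{\tfrac{1}{\tau}\mu\q-\tfrac{\gamma}{\beta}w\p}=-\tfrac{1}{\tau^2}\nrm{\q}^2$). The only cosmetic difference is that the paper first recasts the fluxes and sources of \eqref{eq:CH_Hyp} as derivatives of $e$ before performing the computation, whereas you work directly with the explicit expressions.
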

	\begin{proof}
		Let us express the fluxes in system \eqref{eq:CH_Hyp} as functions of the derivatives of $e$ with respect to the conserved variables as follows
		\begin{subequations}
			\begin{align*}
				& \pdt{c} + \div{\pd{e}{\q}} = 0,\\
				& \pdt{\q}+ \grad{\pd{e}{c}}  = -\pd{e}{\q}, \\
				& \pdt{w} - \div{\pd{e}{\p}}  = -\pd{e}{\varphi}, \\
				& \pdt{\p} -\nabla \prn{\pd{e}{w}} = 0, \\
				& \pdt{\varphi} = \pd{e}{w}.
			\end{align*} 
		\end{subequations}
		Summing up these equations, each multiplied by the corresponding derivative of $E$ allows to write
		\begin{align*}
			&  \pd{e}{c}\pdt{c} +\pd{e}{\q} \cdot \pdt{\q} +\pd{e}{w}\pdt{w} +\pd{e}{\p} \cdot\pdt{\p} + \pd{e}{\varphi} \pdt{\varphi}  \\
			+&\pd{e}{c}\div{\pd{e}{\q}} + \pd{e}{\q} \cdot \grad{\pd{e}{c}} -\pd{e}{\p}\cdot \nabla \prn{\pd{e}{w}} - \pd{e}{w}\div{\pd{e}{\p}} \\
			+&\pd{e}{\q} \cdot \pd{e}{\q} 
			+\pd{e}{w} \pd{e}{\varphi}    - \pd{e}{w}\pd{e}{\varphi} = 0,
		\end{align*} 
		which eventually leads to the equation
		\begin{equation}
			\pdt{e} + \div{\pd{e}{c} \pd{e}{\q} - \pd{e}{\p} \pd{e}{w}} = -\nrm{\pd{e}{\q}}^2 \leq 0,
			\label{eq:Edecay}
		\end{equation}
		or, assuming appropriate boundary conditions, in integral form 
		\begin{equation}
			\frac{dE}{dt} = - \int_\calD\ \nrm{\pd{e}{\q}}^2 \ d D \ \leq 0.
			\label{eq:Edecay_int}
		\end{equation}
		Hence, $E$ is a Lyapunov functional for \eqref{eq:CH_Hyp}.
	\end{proof}
	A final remark on the energy conservation equation \eqref{eq:Edecay}, is the fact that $\partial e/\partial\varphi$ does not appear in the evolution equation. However, it does play a role in the energetic exchanges within the system. Indeed, if one separates the energy density as follows 
	\begin{equation*}
		e(c,\varphi,\q,\p,w) = e_I(c,\varphi,\q) + e_{I\!I}(\p,w), \quad \textnormal{where} \quad  \begin{cases}
		\displaystyle	e_I = g(c) + \frac{\alpha}{2}(c-\varphi)^2 + \frac{1}{2\tau} \nrm{\q}^2, \\[0.5em]
		\displaystyle	e_{I\!I} =  \frac{\gamma}{2}\nrm{\p}^2 + \frac{1}{2\beta} w^2, 
		\end{cases}
	\end{equation*}
	one can obtain the following evolution equation for each of the energy parts
	\begin{align*}
			& \pdt{e_I} + \div{\pd{e_I}{c} \pd{e_I}{\q}} = -\pd{e_I}{\varphi} \pd{e_{I\!I}}{w} 	-\nrm{\pd{e_I}{\q}}^2, \\
			& \pdt{	e_{I\!I}} - \div{\pd{	e_{I\!I}}{\p} \pd{	e_{I\!I}}{w}} = \pd{e_I}{\varphi} \pd{e_{I\!I}}{w} ,	
	\end{align*}
	where one can see that $\partial e/\partial\varphi$ is the main term governing the energetic exchange between $e_I$ and $e_{I\!I}$.
	\subsection{Hyperbolicity}
	The system of equations \eqref{eq:CH_Hyp} is invariant by rotations of the $\mathrm{SO}_3$ group. This makes it sufficient to study its hyperbolicity  in the one-dimensional case, \textit{i.e} all state variables are assumed only to depend on one arbitrary dimension of space $x$ and on time $t$. Under these considerations, one can cast \eqref{eq:CH_Hyp} into quasilinear form
	\begin{equation*}
		\pdt{\mathbf{Q}} + \mathbf{A}( \mathbf{Q}) \pd{\mathbf{Q}}{x} = \mathbf{S} ( \mathbf{Q}),
	\end{equation*}
	where $\mathbf{Q}$ is the vector of conserved variables and $\mathbf{A}(\mathbf{Q})$ is the Jacobian matrix of the system, both given by
	\begin{align*}
		\mathbf{A}(\mathbf{Q}) = \left( 
		\begin{array}{@{}ccccc|c@{}}
			0 & \frac{1}{\tau} & 0 & 0 & 0 & \multicolumn{1}{c}{\multirow{4.5}{11pt}{$\mathbf{O}_{4,4}$}} \\
			\alpha + g''(c) & 0 & 0 & 0 & -\alpha & \\
			0 & 0 & 0 & -\gamma & 0 & \\
			0 & 0 & -\frac{1}{\beta} & 0 & 0 & \\
			\cmidrule[0.4pt]{1-6}
			\multicolumn{5}{c|}{\multirow{1}{*}{$\mathbf{O}_{5,5}$}} & \multirow{1}{*}{$\mathbf{O}_{5,4}$} 
		\end{array} 
		\right), \quad \mathbf{Q} = \prn{c,q_1,w,p_1,\varphi,q_2,q_3,p_2,p_3}^T,
	\end{align*}
	where $\mathbf{O}_{n,m}$ are $n\times m$ null matrices and $\displaystyle g''(c) = \frac{d^2g}{dc^2}$. Straightforward linear algebra shows that $\mathbf{A}(\mathbf{Q}) $ admits $9$ eigenvalues whose expressions are given hereafter 
	\begin{align*}
		&\lambda_{1} = -\frac{\sqrt{g''(c)+\alpha}}{\sqrt{\tau }}, \quad \lambda_{2} = -\frac{\sqrt{\gamma }}{\sqrt{\beta }}, \quad \lambda_{3-7} = 0, \quad  \lambda_{8} = \frac{\sqrt{\gamma }}{\sqrt{\beta }}, \quad \lambda_{9} = \frac{\sqrt{ g''(c)+\alpha}}{\sqrt{\tau }}.
	\end{align*}
	If we denote 
	\begin{equation*}
		\alpha_c = \left\vert \min_{c\in[-1,1]} (g''(c))\right\vert,
	\end{equation*}
	then, for $\alpha\geq\alpha_c$ all the eigenvalues are always real for $c\in[-1,1]$. Note that for the double-well potential \eqref{eq:g(c)}, $\alpha_c = 1$.  
	A full basis of linearly independent right eigenvectors can also be easily computed. These eigenvectors are collected below as the columns of the following matrix, in the same order as their corresponding eigenvalues
	\begin{equation*}
		\mathbf{R} = \left(
		\begin{array}{ccccccccc}
			-\frac{1}{\sqrt{\tau } \sqrt{\alpha + g''(c)}} & 0 & 0 & 0 & 0 & 0 & \frac{\alpha }{\alpha + g''(c)} & 0 & \frac{1}{\sqrt{\tau } \sqrt{\alpha + g''(c)}} \\
			1 & 0 & 0 & 0 & 0 & 0 & 0 & 0 & 1 \\
			0 & -\sqrt{\beta } \sqrt{\gamma } & 0 & 0 & 0 & 0 & 0 & \sqrt{\beta } \sqrt{\gamma } & 0 \\
			0 & 1 & 0 & 0 & 0 & 0 & 0 & 1 & 0 \\
			0 & 0 & 0 & 0 & 0 & 0 & 1 & 0 & 0 \\
			0 & 0 & 0 & 0 & 0 & 1 & 0 & 0 & 0 \\
			0 & 0 & 0 & 0 & 1 & 0 & 0 & 0 & 0 \\
			0 & 0 & 0 & 1 & 0 & 0 & 0 & 0 & 0 \\
			0 & 0 & 1 & 0 & 0 & 0 & 0 & 0 & 0 \\
		\end{array}
		\right).
	\end{equation*}
	One can check that these vectors are always linearly independent since
	\begin{equation*}
		\det{\mathbf{R}} = -4 \sqrt{\frac{\beta \gamma/ \tau}{\alpha +g''(c)} } \ne 0 \quad \forall \ \beta>0, \ \gamma >0, \ \tau >0, \ \alpha > \alpha_c.
	\end{equation*}
	Up to now, we did not use a specific form of $g(c)$, as to keep the analysis general. In what follows, we shall now use $g(c)$ given by \eqref{eq:g(c)} for its simplicity and for the fact that it allows to find exact solutions in a closed form ($tanh$-type phase transition fronts and periodic solutions, see subsection \ref{exact}).
	
	\section{Numerical Schemes}
	\label{sec:numeric}
	\subsection{Notations}
	In this section we shall use the following notations, presented in two space dimensions for the sake of simplicity. The reduction to one dimension is simply obtained by omitting the $y-$direction.
    The computational domain is $\Omega_c=[x_L,x_R]\times[y_L,y_R]$ discretized over $N_x\times N_y$ equally spaced Cartesian grid cells $\Omega_{i,j} = [x_{i-\halb}, x_{i+\halb}] \times [y_{j-\halb}, y_{j+\halb}]$, each of size $|\Omega_{i,j}| = \Delta x \times \Delta y$ where $\Delta x = (x_R-x_L)/N_x$ and $\Delta y = (y_R-y_L)/N_y$. Each computational cell $\Omega_{i,j}$ is uniquely identified by the coordinates of its center $(x_i,y_j),\ \forall (i,j)\in[1..N_x]\times[1..N_y]$ such that 
	\begin{equation*}
		x_i = x_L + \frac{i-1}{2} \Delta x \quad \forall i \in[1..N_x], \qquad y_j = y_L + \frac{i-1}{2} \Delta y \quad \forall i \in[1..N_y],
	\end{equation*}
	with the boundaries of the cells $\Omega_{i,j}$ given by $x_{i+\halb} = \halb (x_i + x_{i+1})$ and $y_{j+\halb} = \halb (y_j + y_{j+1})$, respectively. 
	The time is discretized such that $t^{n+1} = t^{n} + \Delta t$, where $t^0=0$ and $\Delta t$ is either fixed or determined from the CFL stability condition, depending on the numerical method used. Under these definitions, for any discrete quantity $\psi$, we shall use the notation $\psi^n_{i,j} = \psi(x_i,y_j,t^n)$.

	\subsection{Solving the original Cahn-Hilliard equation via a conservative semi-implicit finite difference scheme}
	We propose here a conservative semi-implicit finite differences scheme in order to solve  Equation \eqref{eq:CH1} numerically in a direct manner, without any hyperbolic approximation. This will serve later as reference solution for the discretization of the hyperbolic reformulation \eqref{eq:CH_Hyp}. We rewrite Equation \eqref{eq:CH1} as follows 
	\begin{equation*}
		\pdt{c} - \div{\mathbf{F}} + \gamma\Delta^2  c = 0,
	\end{equation*}
	where $\mathbf{F}$ is the flux given by 
	\begin{equation*}
		\mathbf{F} = \chii(c) \, \nabla c,  \quad \chii(c) = 3c^2-1.
	\end{equation*}
	Then, we propose a finite-volume scheme for the conservative part of the equation combined with finite differences for the bi-Laplacian operator so that the scheme writes as  
	\begin{align}
		c_{i,j}^{n+1} =& c_{i,j}^{n} + \frac{\dt}{\dx} \prn{\calF^{n+1}_{i+\frac{1}{2},j} - \calF^{n+1}_{i-\frac{1}{2},j}} + \frac{\dt}{\dy} \prn{\calG^{n+1}_{i,j+\frac{1}{2}} - \calG^{n+1}_{i,j-\frac{1}{2}}} - \gamma \Delta t \Delta^2_h c^{n+1}_{i,j}.
		\label{eq:scheme_org}
	\end{align}
	The intercell fluxes $\calF^{n+1}_{i+\frac{1}{2},j}$ and $\calG^{n+1}_{i,j+\frac{1}{2}}$, in the $x$ and $y$ directions respectively, are computed using semi-implicit fourth order accurate conservative finite-differences \cite{shuosher1,shuosher2} as follows 
	\begin{equation*}
		\calF^{n+1}_{i+\frac{1}{2},j} = \chii^{n}_{i+\frac{1}{2},j} \, \prn{\nabla_x c}^{n+1}_{i+\frac{1}{2},j}, \quad \text{where} \quad 
		\begin{cases}
			\displaystyle \chii^{n}_{i+\frac{1}{2},j} \simeq \frac{1}{12}\prn{7\, \chii^{n}_{i,j} - \chii^{n}_{i-1,j} +7\, \chii^{n}_{i+1,j}- \chii^{n}_{i+2,j}}, \\[0.5em]
			\displaystyle \prn{\nabla_x c}^{n+1}_{i+\frac{1}{2},j} \simeq -\frac{1}{12\,\dx}\prn{15\, c^{n+1}_{i,j} -15\, c^{n+1}_{i+1,j} + c^{n+1}_{i+2,j} - c^{n+1}_{i-1,j}},
		\end{cases}
	\end{equation*}
	\begin{equation*}
		\calG^{n+1}_{i+\frac{1}{2},j} = \chii^{n}_{i,j+\frac{1}{2}} \, \prn{\nabla_y c}^{n+1}_{i,j+\frac{1}{2}}, \quad \text{where} \quad 
		\begin{cases}
			\displaystyle \chii^{n}_{i,j+\frac{1}{2}} \simeq \frac{1}{12}\prn{7\, \chii^{n}_{i,j} - \chii^{n}_{i,j-1} +7\, \chii^{n}_{i,j+1}- \chii^{n}_{i,j+2}}, \\[0.5em]
			\displaystyle \prn{\nabla_y c}^{n+1}_{i,j+\frac{1}{2}} \simeq -\frac{1}{12\,\dy}\prn{15\, c^{n+1}_{i,j} -15\, c^{n+1}_{i,j+1} + c^{n+1}_{i,j+2} - c^{n+1}_{i,j-1}},
		\end{cases}
	\end{equation*}
	and where $\Delta^2_h c^{n+1}_{i,j}$ is a discretization of the bi-Laplacian operator in the cell-centers as follows
	\begin{align*}
		\Delta^2_h c^{n+1}_{i,j} =&  -\frac{1}{\dx^4} \prn{c_{i-2,j}^{n+1} -4 c_{i-1,j}^{n+1} + 6 c_{i,j}^{n+1} - 4 c_{i+1,j}^{n+1} + c_{i+2,j}^{n+1}} -\frac{1}{\dy^4} \prn{c_{i,j-2}^{n+1} -4 c_{i,j-1}^{n+1} + 6 c_{i,j}^{n+1} - 4 c_{i,j+1}^{n+1} + c_{i,j+2}^{n+1}} \nonumber \\
		& -   \frac{2}{\Delta x^2 \Delta y^2} \prn{c_{i-1,j-1}^{n+1} - 2c_{i,j-1}^{n+1} + c_{i+1,j-1}^{n+1} - 2c_{i-1,j}^{n+1} + 4c_{i,j}^{n+1} - 2c_{i+1,j}^{n+1} + c_{i-1,j+1}^{n+1} - 2c_{i,j+1}^{n+1} + c_{i+1,j+1}^{n+1}}.
	\end{align*}
	The use of fourth order conservative finite differences \cite{shuosher1,shuosher2} for the computation of the numerical fluxes $\calF^{n+1}_{i+\frac{1}{2},j}$ and $\calG^{n+1}_{i+\frac{1}{2},j}$ is necessary in order not to spoil the discretization of the bi-Laplacian, which contains fourth order spatial derivatives. Otherwise, for a simple second-order approximation of the numerical fluxes $\calF^{n+1}_{i+\frac{1}{2},j}$ and $\calG^{n+1}_{i+\frac{1}{2},j}$, the effective value of $\gamma$ would change in the modified equation of the scheme and it would not decrease with the mesh size, thus leading to zeroth order consistency errors.  
	Equation \eqref{eq:scheme_org}, which leads to a linear system for the $c_{i,j}^{n+1}$, is finally solved over the whole domain at each time step using a classical GMRES algorithm \cite{gmres}.
	
	\subsection{Solving the first order hyperbolic reformulation}
	The system of equations \eqref{eq:CH_Hyp} can be cast in conservative form in two dimensions of space such that 
	\begin{equation*}
		\pdt{\mathbf{Q}} + \pd{\mathbf{F}(\mathbf{Q})}{x} + \pd{\mathbf{G}(\mathbf{Q})}{y} = \mathbf{S}(\mathbf{Q}).
	\end{equation*}
	In order to solve this hyperbolic system numerically, we use here a classical second-order MUSCL-Hancock scheme \cite{van1984relation,Toro2009}, whose details are briefly recalled hereafter. As usual in the finite volume approach, we assume the discrete data $\mathbf{Q}_{i,j}^n$ to represent the cell averages over cell $\Omega_{i,j}$ at time $t^n$, i.e. 
	\begin{equation*}
		\mathbf{Q}_{i,j}^n = \frac{1}{\Delta x \Delta y} \int \limits_{\Omega_{i,j}} \mathbf{Q}(\mathbf{x},t^n) \, d \mathbf{x}.  
	\end{equation*}
	\begin{enumerate}
		\item \textbf{Reconstruction step:} In each cell $\Omega_{i,j}$, the left (L), right (R), bottom (B) and top (T) boundary extrapolated values are computed, respectively, as
		\begin{equation*}
			\mathbf{W}^L_{i,j} = \mathbf{Q}_{i,j}^n - \frac12 \partial_x \mathbf{Q}_{i,j}, \quad 
			\mathbf{W}^R_{i,j} = \mathbf{Q}_{i,j}^n + \frac12 \partial_x \mathbf{Q}_{i,j}, \quad 
			\mathbf{W}^B_{i,j} = \mathbf{Q}_{i,j}^n - \frac12 \partial_y \mathbf{Q}_{i,j}, \quad 
			\mathbf{W}^T_{i,j} = \mathbf{Q}_{i,j}^n + \frac12 \partial_y \mathbf{Q}_{i,j},
		\end{equation*}
		where the \textit{unlimited slopes} are given by 
		\begin{equation*}
			\partial_x \mathbf{Q}_{i,j} = \frac1{2\Delta x} \prn{\mathbf{Q}_{i+1,j}^n - \mathbf{Q}_{i-1,j}^n}, \quad 
			\partial_y \mathbf{Q}_{i,j} = \frac1{2\Delta y} \prn{\mathbf{Q}_{i,j+1}^n - \mathbf{Q}_{i,j-1}^n}, 
		\end{equation*}
		Since the original Cahn-Hilliard equation is dissipative, we assume all solutions to be smooth enough so that no nonlinear slope limiter needs to be employed, unlike what is usually proposed in the context of standard MUSCL-Hancock-type schemes \cite{Toro2009}.  
		\item \textbf{Half time-step update:} The boundary extrapolated values are evolved by a half-time step  as follows
		\begin{align*}
			\tilde{\mathbf{W}}^L_{i,j} = \mathbf{W}^L_{i,j}  +\frac{\Delta t}{2}\partial_t \mathbf{Q}_{i,j} , \quad 
			\tilde{\mathbf{W}}^R_{i,j} = \mathbf{W}^R_{i,j}  +\frac{\Delta t}{2}\partial_t \mathbf{Q}_{i,j}, \quad	
			\tilde{\mathbf{W}}^B_{i,j} = \mathbf{W}^B_{i,j}  +\frac{\Delta t}{2}\partial_t \mathbf{Q}_{i,j}, \quad 
			\tilde{\mathbf{W}}^T_{i,j} = \mathbf{W}^T_{i,j}  +\frac{\Delta t}{2}\partial_t \mathbf{Q}_{i,j},
		\end{align*}
		where the discrete time derivative is obtained via a Cauchy-Kowalewskaya procedure up to first order terms, i.e. 
		\begin{equation*}
			\partial_t \mathbf{Q}_{i,j} = -  \frac{1}{\Delta x} \prn{ \mathbf{F}\prn{\mathbf{W}^R_{i,j}} -\mathbf{F}\prn{\mathbf{W}^L_{i,j}}  } - \frac{1}{\Delta y} \prn{ \mathbf{G}\prn{\mathbf{W}^T_{i,j}} -\mathbf{G}\prn{\mathbf{W}^B_{i,j}}  } + \mathbf{S}(\mathbf{Q}_{i,j}^n).
		\end{equation*}
		\item \textbf{Intercell fluxes computation:} This depends on the choice of the approximate Riemann solver. For the Rusanov flux \cite{Rus62} we have
		\begin{align*}
			& \mathcal{F}_{i+\frac12,j} = \frac{1}{2}\prn{ \mathbf{F}(\tilde{\mathbf{W}}^R_{i,j}) +\mathbf{F}(\tilde{\mathbf{W}}^L_{i,j})} - \frac{1}{2} \lambda^{(x)}_{max}\prn{\tilde{\mathbf{W}}^R_{i,j} - \tilde{\mathbf{W}}^L_{i,j}}, \\
			& \mathcal{G}_{i,j+\frac12} = \frac{1}{2}\prn{ \mathbf{F}(\tilde{\mathbf{W}}^T_{i,j}) +\mathbf{F}(\tilde{\mathbf{W}}^B_{i,j})} - \frac{1}{2} \lambda^{(y)}_{max}\prn{\tilde{\mathbf{W}}^T_{i,j} - \tilde{\mathbf{W}}^B_{i,j}}, 
	\end{align*}
	where $\lambda_{max}$ is the maximum signal speed, obtained from the maximum eigenvalues as
	\begin{equation*}
		\lambda_{max}^{(x)} = \max_{k=1..7} \prn{\max  \prn{\left\vert\lambda^k_{i,j}\right\vert,\left\vert\lambda^k_{i+1,j}\right\vert} }, 	\quad 	\lambda_{max}^{(y)} = \max_{k=1..7} \prn{\max  \prn{\left\vert\lambda^k_{i,j}\right\vert,\left\vert\lambda^k_{i,j+1}\right\vert} }.
	\end{equation*}
	 Instead, for the FORCE flux \cite{Toro2009FORCE} we have 
		\begin{align*}
	 &\mathcal{F}_{i+\frac12,j} = \frac{1}{2}\prn{\mathbf{F} \prn{\frac12\prn{\tilde{\mathbf{W}}^R_{i,j}+\tilde{\mathbf{W}}^L_{i,j}}  + \frac{1}{2} \frac{\Delta t}{\Delta x} \prn{\mathbf{F}\prn{\tilde{\mathbf{W}}^R_{i,j}} - \mathbf{F}\prn{\tilde{\mathbf{W}}^L_{i,j}}}  }+\frac12\prn{\mathbf{F} \prn{\tilde{\mathbf{W}}^R_{i,j}}+\mathbf{F}\prn{\tilde{\mathbf{W}}^L_{i,j}}}  - \frac{1}{2} \frac{\Delta x}{\Delta t} \prn{\tilde{\mathbf{W}}^R_{i,j} - \tilde{\mathbf{W}}^L_{i,j}}  }, \\
	 &\mathcal{G}_{i,j+\frac12} = \frac{1}{2}\prn{\mathbf{G} \prn{\frac12\prn{\tilde{\mathbf{W}}^T_{i,j}+\tilde{\mathbf{W}}^B_{i,j}}  + \frac{1}{2} \frac{\Delta t}{\Delta y} \prn{\mathbf{G}\prn{\tilde{\mathbf{W}}^T_{i,j}} - \mathbf{G}\prn{\tilde{\mathbf{W}}^B_{i,j}}}  }+\frac12\prn{\mathbf{G} \prn{\tilde{\mathbf{W}}^T_{i,j}}+\mathbf{G}\prn{\tilde{\mathbf{W}}^B_{i,j}}}  - \frac{1}{2} \frac{\Delta y}{\Delta t} \prn{\tilde{\mathbf{W}}^T_{i,j} - \tilde{\mathbf{W}}^B_{i,j}}  },
		\end{align*}

	\item \textbf{Final explicit time update:} 
	\begin{equation*}
		\mathbf{Q}_{i,j}^{n+1} = \mathbf{Q}_{i,j}^{n} - \frac{\Delta t}{\Delta x} \prn{\mathcal{F}_{i+\frac12,j} - \mathcal{F}_{i-\frac12,j}} - \frac{\Delta t}{\Delta y} \prn{\mathcal{G}_{i,j+\frac12} - \mathcal{G}_{i,j-\frac12}} + \Delta t \,\mathbf{S} \prn{\mathbf{Q}_{i,j}^n + \frac{\Delta t}{2}\partial_t \mathbf{Q}_{i,j} }.
	\end{equation*}
		
	\end{enumerate}
	
	\section{Numerical results}
	\label{sec:results}
	In this section we present numerical results for some relevant benchmark problems.
	Concerning the hyperbolic approximation, in all what follows and unless explicitly stated otherwise, initial data is always imposed as follows
	\begin{equation}
		c(\x,0) = c_0(\x),\quad \varphi(\x,0) = c_0(\x),\quad  \p(\x,0) = \nabla c_0(\x), \quad w(\x,0) = \beta \Delta(c_0(\x)^3-c_0(\x)),\quad \q(\x,0) = 0.
		\label{eq:compatibleIC}
	\end{equation}

	\subsection{Convergence in $\alpha$ for a stationary solution} 
	In this test case, we would like to assess the accuracy of the presented hyperbolic approximation for a stationary state. In particular, we would like to study the influence of the penalty parameter $\alpha$ on the solutions of the hyperbolic system and to analyze at least numerically whether convergence can be observed in terms of the latter parameter. Thus, we propose to compare equivalent equilibrium solutions for both systems of equations \eqref{eq:CH1} and \eqref{eq:CH_Hyp} in one dimension of space. To do this, we consider both equations at equilibrium ($\partial/\partial t \equiv 0$) and we solve the corresponding Cauchy problem by choosing an initial point of the solution for both systems. As the equations are written in different variables, one must be careful into formulating the Cauchy problems in a way that would allow for a straightforward analogy between the two systems. Let us begin with the original system for which the Cauchy problem in one dimension of space is easily obtainable from \eqref{eq:CH_J} as follows 
	\begin{gather*}
		\pd{J}{x} = 0, \quad J = \gamma \pd{^3c}{x^3} - (3c^2-1)\pd{c}{x}, \\ J(x_0) = J^0, \quad c(x_0) = c^0, \quad \left.\pd{c}{x}\right\vert_{x=x_0} = c_I^0, \quad \left.\pd{^2c}{x^2}\right \vert_{x=x_0} = c_{I\!I}^0, 
	\end{gather*}    
	which can be written in a first-order system as 
	\begin{subequations}
		\begin{gather}
			c' = c_I,\quad c_I' = c_{I\!I}, \quad c_{I\!I}' = \frac{1}{\gamma}\prn{J +(3c^2-1)c_1 }, \quad  J' = 0, \\ c(x_0) = c^0, \quad c_I(x_0)= c_I^0, \quad c_{I\!I}(x_0) = c_{I\!I}^0, \quad J(x_0)= J^0, 
		\end{gather}   
		\label{eq:ODE1}
	\end{subequations}
	where \textit{prime} denotes differentiation with respect to $x$. The choices of these variables allow for an analogous Cauchy problem to be formulated for the hyperbolic system \eqref{eq:CH_Hyp} and which writes
	\begin{subequations}
		\begin{gather}
			\varphi' = p, \quad
			p' =  \frac{\alpha}{\gamma} \prn{\varphi-c}, \quad 
			c' =\frac{ \alpha  p - \tilde{q} }{3 c^2 -1 + \alpha}, \quad
			\tilde{q}' =0, \\
			\varphi(x_0) = c^0 + \frac{\gamma}{\alpha} c_{I\!I}^0, \quad  p(x_0) = c_I^0, \quad c(x_0) = c^0, \quad \tilde{q}(x_0) = - J^0, 
		\end{gather}
		\label{eq:ODE2}
	\end{subequations}
	where $\tilde{q} = q/\tau$. Note that since we are looking for stationary solutions, the Cauchy problem \eqref{eq:ODE2} is independent of $\beta$ and $\tau$, as they are both time-relaxation parameters. The initial-value problems given by equations \eqref{eq:ODE1} and \eqref{eq:ODE2} are both solved using a classical fourth-order Runge-Kutta scheme. The numerical computations are done over the one-dimensional domain $x\in[0,0.6]$ with a constant step size $\Delta x=10^{-5}$. The initial values are given by 
	\begin{equation*}
		c^0 = 1-10^{-6}, \quad  c^0_I = -10^{-5}, \quad c^0_{I\!I} = -10^{-10}, \quad J^0 = 10^{-8}.
	\end{equation*} 
	We take the free parameter $\gamma=10^{-4}$. First we solve the initial-value problem \eqref{eq:ODE1} from which we obtain a reference solution, which we denote hereafter by $\hat{c}(x)$. Then we run several simulations for the hyperbolic model with different values of the parameter $\alpha=\{25,100,400\}$. The comparison of these numerical solutions is given in Figure \ref{fig:conv_alpha}. A numerical convergence table is also provided in Table \ref{tab:alpha} for more values of $\alpha$. The obtained solution $c(x)$ for every $\alpha$ is compared with the reference solution $\hat{c}(x)$ as well as with $\phi(x)$ and the variable $p(x)$ is compared with $\hat{c}_1(x)$. We use here the discrete relative $L_2$ error defined as 
	\begin{equation*}
		\nrm{c-\hat{c}}_{L_2} = \frac{\sqrt{\sum_{i=1}^{N}  \, (c(x_i)-\hat{c}(x_i))^2}}{\sqrt{\sum_{i=1}^{N}  \, c(x_i)^2}}.
	\end{equation*}
	Figure \ref{fig:conv_alpha} shows that the solutions are in excellent agreement for sufficiently high values of the penalty parameter. When $\alpha$ is low, the figure shows that the wavelength of the solution is not correctly captured, thus generating a phase shift increasing with $x$. The amplitude and the overall shape of the solution seem to be well captured nevertheless. Table \ref{tab:alpha} shows that the numerical solution of system \eqref{eq:ODE2} converges towards that of system \eqref{eq:ODE1} with a first-order rate in terms of the penalty parameter $\alpha$.      
	\begin{figure}[H]
		\centering
		\includegraphics[]{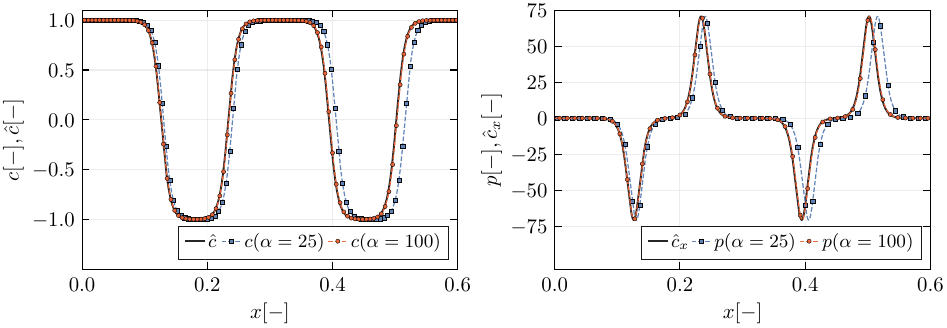}
		\caption{Comparison of a stationary solution of the hyperbolic Cahn-Hilliard model (discontinuous lines) with the original counterpart (solid line) for different values of the penalty parameter $\alpha$. The displayed solutions are obtained by solving the initial-value problems \eqref{eq:ODE1} and \eqref{eq:ODE2} on the domain $x\in[0,0.6]$ discretized with a constant step of size $\Delta x=10^{-5}$.}  
		\label{fig:conv_alpha}
	\end{figure}

	\begin{table}[H]
		\setlength{\tabcolsep}{12pt}
		\centering
		\small
		\begin{tabular}{lllllll}\toprule
			$\alpha$ & $\nrm{c-\hat{c}}_{L_2}$ & $\nrm{\p-\nabla \hat c}_{L_2}$ & $\nrm{c-\varphi}_{L_2}$  & $\mathcal{O}(c-\hat c)$ &$\mathcal{O}(\p - \nabla \hat c)$ & $\mathcal{O}(c-\varphi)$  \\
			
			\midrule
			$25$ &$2.64\times 10^{-1}$ & $5.66 \times 10^{-1}$ &$7.01 \times 10^{-3} $ & $-$ &$-$ & $-$ \\

			$50$ &$1.35\times10^{-1}$ & $3.02 \times 10^{-1}$  &$3.51 \times 10^{-3}$  &0.96 &0.90 &0.99  \\

			$100$ &$6.82\times 10^{-2}$ & $1.54 \times 10^{-1}$ &$1.75 \times 10^{-3}$ &0.99 &0.97 &0.99  \\

			$400$ &$1.70\times 10^{-2}$ & $3.86 \times 10^{-2}$ &$4.39 \times 10^{-4}$ &1.00 &0.99 &0.99  \\
			
			$1600$ &$3.80\times 10^{-3}$ & $8.64 \times 10^{-3}$ &$1.10 \times 10^{-4}$ &1.08 &1.08 &1.00  \\
			\toprule
		\end{tabular}
		\label{tab:alpha}
		\caption{Convergence table for the $L_2$ errors when comparing the numerical Cauchy problem solutions for the hyperbolic model with the the original Cahn-Hilliard equation. }
	\end{table}
	
	\subsection{Study of an exact solution}
	\label{exact}
	One can find a family of exact one-dimensional stationary periodic solutions to the Cahn-Hilliard system expressed as
	\begin{equation}
		c_{\epsilon}(x) = \sqrt{1-\epsilon}\ \mathrm{sn} \prn{\sqrt{\frac{\epsilon+1}{2\gamma}} (x-x_0),\ \sqrt{\frac{1-\epsilon}{1+\epsilon}}  }.
		\label{eq:Exact_SN}
	\end{equation}
	Here, $\mathrm{sn}(x,s)$ is the Jacobi elliptic sine function, whose second argument is chosen here as the elliptic modulus $s$. This solution family depends on a free real parameter $\epsilon\in[0,1]$ that determines its amplitude, period and elliptic modulus, and on $x_0$ only affecting its phase shift. It is worthy of note that in the limit $\epsilon\rightarrow 0$ corresponding to $s\rightarrow1$, one recovers the well-known solution 
	\begin{equation*}
		c(x) = \mathrm{tanh} \prn{\frac{x-x_0}{\sqrt{2\gamma}} }
	\end{equation*}
	as a particular case. Besides this particular limiting case, we recall that the Jacobi elliptic sine function is periodic for $0\leq s<1$ of period equal to $4K(s)$, where $K(s)$ is the complete elliptic integral of first kind given by the integral $K(s) = \int_0^{\pi/2} d \theta / \sqrt{1-s^2 \sin^2\theta}$. Thus, for fixed values of $\epsilon$ and $\gamma$, solution \eqref{eq:Exact_SN} is periodic in space and its wavelength is 
	\begin{equation*}
		\lambda = 4\sqrt{\frac{2\gamma}{\epsilon+1}} K\prn{\sqrt{\frac{1-\epsilon}{1+\epsilon}}}. 
	\end{equation*}
While \eqref{eq:Exact_SN} is an exact solution of the original Cahn-Hilliard equation \eqref{eq:CH1}, it is not a solution of the hyperbolic reformulation \eqref{eq:CH_Hyp}. This implies that the system will necessarily evolve in time, even if slightly and indiscernibly so.  Thus, it is of interest to see whether the numerical solution remains stable and to analyze some its features. For this test case, we set the values of $\epsilon=0.01$ and $\gamma=0.001$. Then, we consider a computational domain that expands over two wavelengths of the solution $[0,2\lambda]$, discretized over $N=2000$ cells and we impose periodic boundary conditions. We further take $\beta = 10^{-6}$, $\alpha=500$ and $\tau=8.10^{-4}$. The $\mathrm{CFL}$ number is taken equal to $0.95$ and the final simulation time is $t=10$. The comparison of the numerical solution with the corresponding initial data (that is the exact solution of Equation \eqref{eq:CH1}) is provided in Figure \ref{fig:exactSN}. We use here a MUSCL-Hancock scheme with the FORCE flux. 
	\begin{figure}[H]
		\centering
		\includegraphics[]{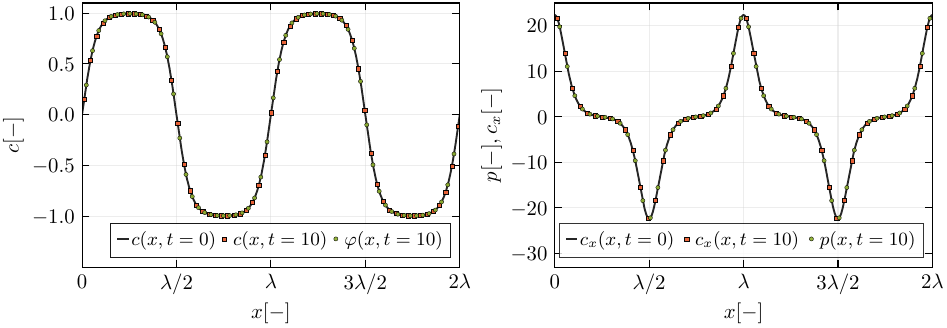}
		\caption{Comparison of the numerical solution obtained by solving numerically \eqref{eq:CH_Hyp} (shaped dots), with the equivalent exact solution of \eqref{eq:CH1}, which is provided as initial data (black line). The left graphic compares $c(x)$ (red squares) and $\varphi(x)$ (green circles) to the initial data. The right graphic compares the numerically evaluated $\partial c/\partial x$ (red squares) and $p(x)$ (green circles) to the gradient of the exact solution.   }  
		\label{fig:exactSN}
	\end{figure} 
The comparison shows perfect agreement between the exact solution of \eqref{eq:CH1} inserted as initial condition and the numerical solution shown at $t=10$, which is reached after approximately $15$M time-steps. This means that for the chosen parameters, the exact solutions of both PDE \eqref{eq:CH1} and \eqref{eq:CH_Hyp} are almost equal. 
In order to further analyze the behavior of the system and to also assess the efficiency of the numerical method, we provide three additional graphics in Figure \ref{fig:E_time}. The leftmost graphic shows the relative error of the total energy integral over the whole domain, with respect to its initial value, plotted over time for several values of the parameter $\tau$ and for the same mesh resolution of $N=2000$.
\begin{figure}[!t]
\centering
\includegraphics[]{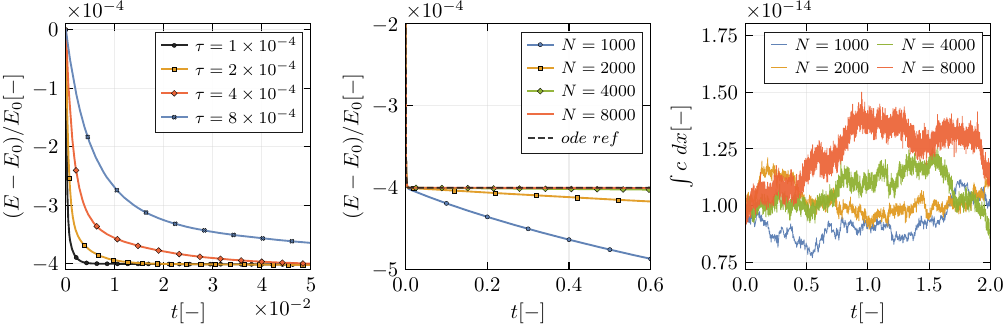}
\caption{Left: Relative error of the total energy over time for several values of $\tau$ and a fixed mesh resolution of $N=2000$. Center: Relative error of the total energy over time for a fixed value of $\tau = 10^{-4}$ and several mesh resolutions. The dashed lines correspond to the solution obtained by numerically integrating the ode \eqref{eq:Edecay_int}. Right: Numerical integral of $c$ over time for several mesh sizes (discrete mass conservation).}  
\label{fig:E_time}
\end{figure} 
One can see from the figure that energy decays towards a seemingly constant state, over a period of time that is longer for increasing values of $\tau$. The central graphic of Figure \ref{fig:E_time} shows the energy decay for same value of $\tau=10^{-4}$ but for several mesh resolutions. This allows to show that the observed dissipation in the leftmost graphic is rather due to the decay law \eqref{eq:Edecay_int} than to the numerical viscosity of the numerical method. Indeed the central graphic compares the evolution of the energy decay computed from the numerical solution to the one obtained by dynamically solving the ODE obtained by integrating \eqref{eq:Edecay_int}.
One can see that refining the mesh allows to obtain a better agreement with the latter. Lastly, the rightmost curve demonstrates that the numerical method conserves well the integral of $c$ over the computational domain, as the fluctuations are of the order of machine precision, i.e. we have verified discrete mass conservation.

\subsection{Spinodal decomposition}
The Cahn-Hilliard model is generally used to describe spinodal decomposition, \textit{i.e} a phase separation process in which the mixture of both phases becomes unstable and spontaneously leads to the formation of regions where only one of the phases is abundant. Usually, most test cases in the literature simulating this problem consider  \textit{random} initial data of low amplitude around $c=0$. Here, we deliberately propose a \textit{deterministic} expression in an attempt to guarantee reproducibility of the results. Thus, we suggest the following initial data
\begin{equation}
c(x) = 	\begin{cases}
		\phantom{-}0.01 \prn{\prn{\sin(10\pi (1+x)}-\sin\prn{10\pi (1+x)^2}}, \quad \mathrm{if} \  x\in[-1,0] \\ 
		-0.01 \prn{\prn{\sin(10\pi (1-x)}-\sin\prn{10\pi (1-x)^2}}, \quad \mathrm{if} \  x\in[0,1].
	\end{cases}
	\label{eq:spinodal}
\end{equation}
This function is built in such a way that it is $C^{\infty}$ over $[-1,1]$ as well as over $\mathbb{R}$ by periodic prolongation. This allows to consider periodic boundary conditions without having to worry about discontinuities on the boundaries. Given that the initial data \eqref{eq:spinodal} is a small amplitude perturbation of the unstable equilibrium $c=0$, the oscillations will grow in time forcing separation of the phases, until an equilibrium is eventually reached. We simulated this test using both the hyperbolic reformulation \eqref{eq:CH_Hyp}, as well as the original model \eqref{eq:CH1} for reference. We choose $\gamma=0.001$. For the hyperbolic approximation, we take $\beta = 10^{-7}$, $\alpha = 500$ and $\tau=10^{-5}$ and we take the computational domain $[-1,1]$, discretized over $N=2000$ cells. The CFL number is taken equal to $0.95$ and we use MUSCL-Hancock with FORCE flux. For the original model, the same domain is discretized over $N=1000$ points and we choose a fixed $\Delta t = 10^{-5}$. The final simulation time is $t=4$ for both simulations. In Figure \ref{fig:spinodal} a comparison of the numerical results obtained for both models is provided at several selected times, showcasing the solution dynamics.  
\begin{figure}[H]
	\centering
	\includegraphics[]{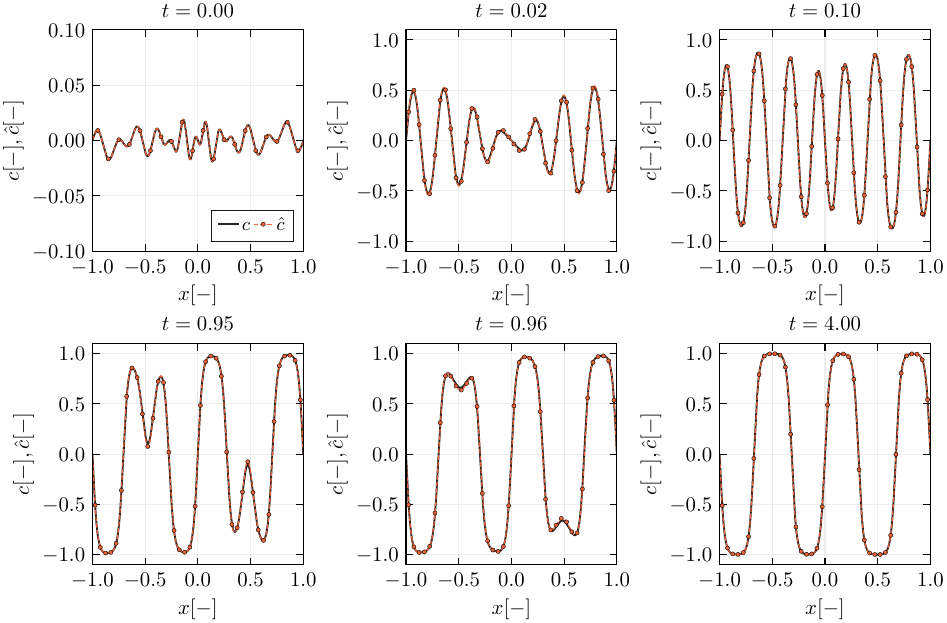}
	\caption{Comparison of the numerical results for the spinodal decomposition test case between the original Cahn-Hilliard model (orange dash-dotted lined) and its hyperbolic counterpart \eqref{eq:CH_Hyp} (solid black line), for several times.}  
	\label{fig:spinodal}
\end{figure} 
Both solutions show an excellent agreement between them and the dynamics are well-captured, especially in the earliest stage of the simulation where the growth dynamics are rather fast. We point out here that taking initial data as reported in \eqref{eq:compatibleIC} is important as soon as fast solution dynamics are involved. In fact, we show a sample of the same solution plotted for different initial data where well-preparedness is totally or partially absent. Thus, we reconsider the same configuration as above, and we only change the initial data, taken from the following set
\begin{alignat*}{5}
	IC^1: \quad c(x,0) &=c_0(x), \quad  &&\varphi(x,0) = c_0(x),\quad  &&\p(x,0) = 0, \quad && w(x,0) = 0,\quad &&\q(x,0) = 0, \\
	IC^2:\quad c(x,0) &=c_0(x), \quad &&\varphi(x,0) = c_0(x),\quad  &&\p(x,0) = \nabla c_0(x), \quad && w(x,0) = 0,\quad &&\q(x,0) = 0, \\
	IC^3:\quad c(x,0) &= c_0(x),\quad &&\varphi(x,0) = 0,\quad  &&\p(x,0) = 0, \quad &&w(x,0) = 0,\quad &&\q(x,0) = 0. \\
\end{alignat*}
For each of these unprepared initial conditions, we perform a numerical simulation which we then compare to the result of Figure \ref{fig:spinodal}, which was obtained from the well-prepared initial condition \eqref{eq:compatibleIC}. Figure \eqref{fig:wpIC} shows a comparison of the obtained numerical results from the initial conditions $IC^{1-3}$, denoted respectively by $c^{1-3}$ with the previous solution denoted now by $c^{wp}$, as well as $\hat{c}$, which is the reference numerical solution obtained from the original Cahn-Hilliard model.
One can see that significant differences between the solutions can be observed. The most drastic change is for $IC^1$, that is when $\p(\x,0)\neq \nabla\varphi(\x_0)$. Even though, one can see from the right graphic of Figure \ref{fig:wpIC} that $p^1$ and $c^1_x$ are still in good agreement, the solution had already been distorted in its earlier stages due to the initial data inconsistency and they are completely off the reference solution. 
\begin{figure}[!h]
	\centering
	\includegraphics[]{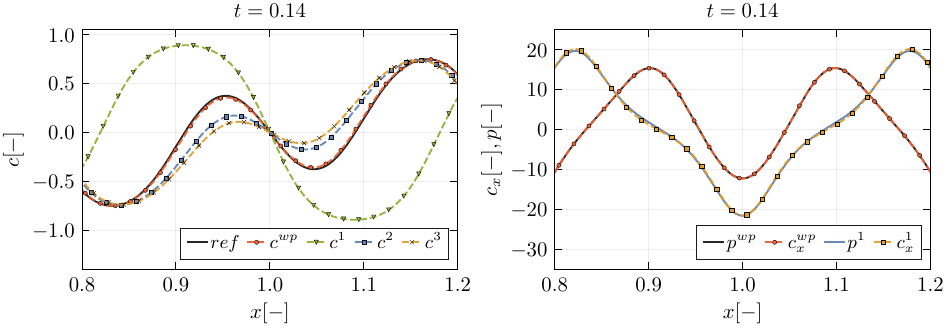}
	\caption{Left: Comparison of the numerical results for the initial conditions $IC^{1-3}$ (green triangles, blue squares and yellow squares, respectively) with the numerical simulation obtained with a well-prepared initial condition $c^{wp}$ (red circles) and $\hat{c}$ (solid black line). Right: Comparison of $p$ and $c_x$ for the numerical solutions $c^1$ and $c^{wp}$. Comparison is done at $t=0.14$ for the same parameters as \ref{fig:spinodal}.}  
	\label{fig:wpIC}
\end{figure}

\subsection{1D Ostwald Ripening}
Another test case for which the Cahn-Hilliard equation is well-known is the so-called Ostwald ripening phenomenon. Unlike spinodal decomposition, one starts here from already separated phases, however from a configuration where a phase is unequally distributed in the other. In this case, the smallest regions of the distributed phase diffuse and are absorbed into the larger regions. Thus, let us consider the following initial condition, adapted from \cite{neusser2015relaxation,hitz2020parabolic,dhaouadi2022NSK}, and which consists in two neighboring different sized 1D "bubbles" of the type    
\begin{equation*}
	c(x,0) = 1 + \sum_{i=1}^2 \tanh\prn{\frac{x-x_i-r_i}{\sqrt{2\gamma}}} - \tanh\prn{\frac{x-x_i+r_i}{\sqrt{2\gamma}}},
\end{equation*}
where $x_1=0.30$, $x_2=0.75$ and $r_1=0.12$, $r_2=0.06$ are the positions and the radii of the two bubbles, respectively. For this simulation we take $\gamma=10^{-3}$. For the hyperbolic model we also take $\alpha=1000$, $\tau=10^{-4}$ and $\beta=10^{-7}$. The computational domain is $[0,1]$ discretized over $N=1000$ cells for both hyperbolic and original models. The CFL number is taken equal to $0.95$ for the hyperbolic model and we fix $\Delta t=10^{-4}$ for the original one. The final simulation time is $t=0.3$. A comparison of both results showcasing the main dynamics of the solution are presented in Figure \ref{fig:OR1D}.
\begin{figure}[!b]
	\centering
	\includegraphics[]{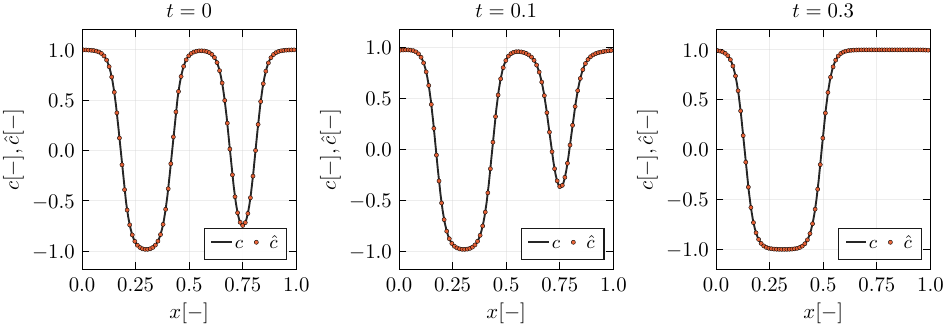}
	\caption{Comaprison of the numerical solutions for hyperbolic Cahn-Hilliard model \eqref{eq:CH_Hyp} (black line) and the original model \eqref{eq:CH1} (red dots)for the Ostwald Ripening test case at times $t=\{0,0.1,0.3\}$. }  
	\label{fig:OR1D}
\end{figure} 
The comparison of both solutions shows an excellent agreement, as the smaller bubble diffuses and gets absorbed by the larger one, which then becomes stationary. The chosen hyperbolic model parameters are such that no observable delay in the solution dynamics is present.

\subsection{2D stationary radial solution}
In order to obtain a radially symmetric equilibrium solution, we consider here a pseudo-transient continuation method, \textit{i.e.} we numerically solve the time-dependent Cahn-Hilliard equation \eqref{eq:CH1} in polar coordinates $r-\theta$, assuming radial symmetry, i.e. $\partial / \partial \theta = 0$. We thus get 
\begin{equation*}
\pdt{c} - \frac{1}{r}\pd{}{r}\prn{r \pd{}{r}\prn{c^3 - c - \frac{\gamma}{r} \pd{}{r}\prn{r \pd{c}{r}}}} = 0, \qquad r = \sqrt{x^2+y^2},
\end{equation*}
and we consider the following initial condition 
\begin{equation*}
	c_0(r) = -\tanh\prn{\frac{r-0.5}{\sqrt{2\gamma}}},
\end{equation*}  
which is then evolved towards a stationary state. We use a semi-implicit conservative finite difference scheme on a very fine radial mesh to obtain a reliable reference solution. In the following the value of $\gamma=10^{-3}$ is taken.  This method offers more flexibility than solving a singular Cauchy problem for the corresponding stationary ODE, as the prescription of initial data in the vicinity of $r=0$ offers little control of the desired shape of the solution and its behavior. After convergence towards equilibrium, the obtained solution is then inserted as initial data for the hyperbolic model, in order to verify if it remains stationary. Conversion of the data from 1D radial coordinate to the 2D Cartesian grid is done using classical high order piecewise polynomial Lagrange interpolation of degree four, allowing to obtain the values of $c(x,y)$ and $\p(x,y)$. 
\begin{figure}[H]
	\centering
	\includegraphics[]{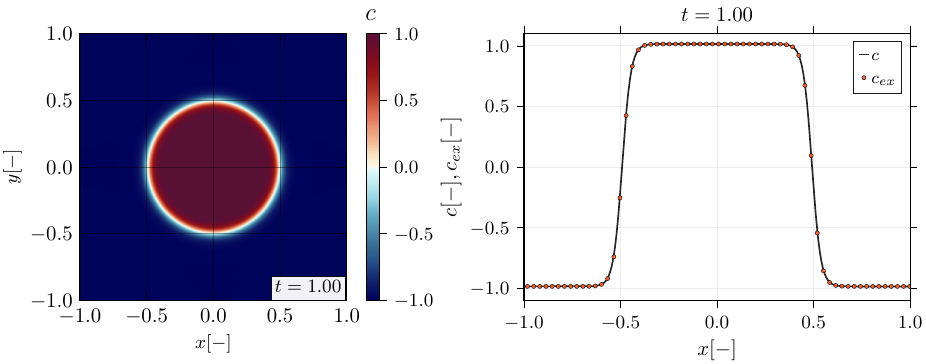}
	\caption{Left: 2D color plot of the radial stationary solution. Right: radial cut of the numerical solution along the line $y=0$ at $t=1$ (black line), compared with the exact solution $c_{ex}$, provided as initial data (red dots). }  
	\label{fig:Bubble}
\end{figure} 
We consider for this simulation a square domain $[-1,1]\times[-1,1]$, discretized over $500\times500$ cells. We take $\gamma=10^{-3}$, $\alpha = 500$, $\beta=10^{-6}$ and $\tau=10^{-4}$. The CFL number is set to $0.9$. The numerical solution is shown in Figure \ref{fig:Bubble}, together with a comparison with the radial reference solution. The numerical solution of the hyperbolic system, plotted at $t=1.00$ is visibly the same as the radial reference solution of the original Cahn-Hilliard equation. The final simulation time was reached after nearly $15$M time-steps, meaning that the stationary equilibrium is preserved also in a long-term regime.   

\subsection{2D Ostwald ripening}
We finally consider the process of Ostwald ripening in two space dimensions and we take an initial condition given by the expression 
\begin{equation}
	c(x,y,0) = 1 + \sum_{i=1}^8 \tanh\prn{\frac{r_i(x,y)-r^b_{i}}{\sqrt{2\gamma}}} - \tanh\prn{\frac{r_i(x,y)+r^b_{i}}{\sqrt{2\gamma}}}, \qquad r_i(x,y) = \sqrt{(x-x_i)^2 + (y-y_i)^2},
	\label{eq:IC_OR2D}
\end{equation} 
where $(x_i,y_i)$ are the Cartesian coordinates of the $i^{th}$ bubble, $r_i$ is the corresponding local radial coordinate and $r^b_i$ is its approximate initial radius. The corresponding values are summarized in Table \ref{tab:radii}.
\begin{table}[H]
	\centering
	\begin{tabular}{l|rrrrrrrr}\toprule
		$i$ & $1$ & $2$ &$3$& $4$ & $5$ &$6$ &$7$ & $8$
		\\ 
		\midrule
		$x_i$   & $0.00$ & $-0.30$ & $-0.30$ & $-0.35$ & $0.00$& $0.25$ & $0.30$  & $0.35$\\
		$y_i$   & $0.10$ & $-0.40$ & $0.40$  & $0.00$ & $-0.30$& $0.45$ & $-0.35$ & $0.05$\\
		$r^b_i$ & $0.15$ &  $0.10$ & $0.10$  & $0.06$ & $0.07$ & $0.06$ & $0.08$ & $0.07$\\ 
		\bottomrule
	\end{tabular}
	\caption{Coordinates and radii for the initial condition \eqref{eq:IC_OR2D}, for the 2D Ostwald ripening test case.}
	\label{tab:radii}
\end{table}
For this test case we take $\gamma=10^{-3}$. The computational domain is $[-0.5,0.5]\times[-0.6,0.6]$. For the hyperbolic model, we take $\alpha=1000$, $\beta=10^{-8}$ and $\tau=10^{-5}$, we consider a discretization over $600\times720$ Cartesian square cells and the CFL number is set to $0.9$. For the original model, we discretize the domain similarly and we take $\Delta t=10^{-5}$. Periodic boundary conditions in both directions are considered for both simulations and final time is $t=1$. 
\begin{figure}[H]
	\centering
	\includegraphics[]{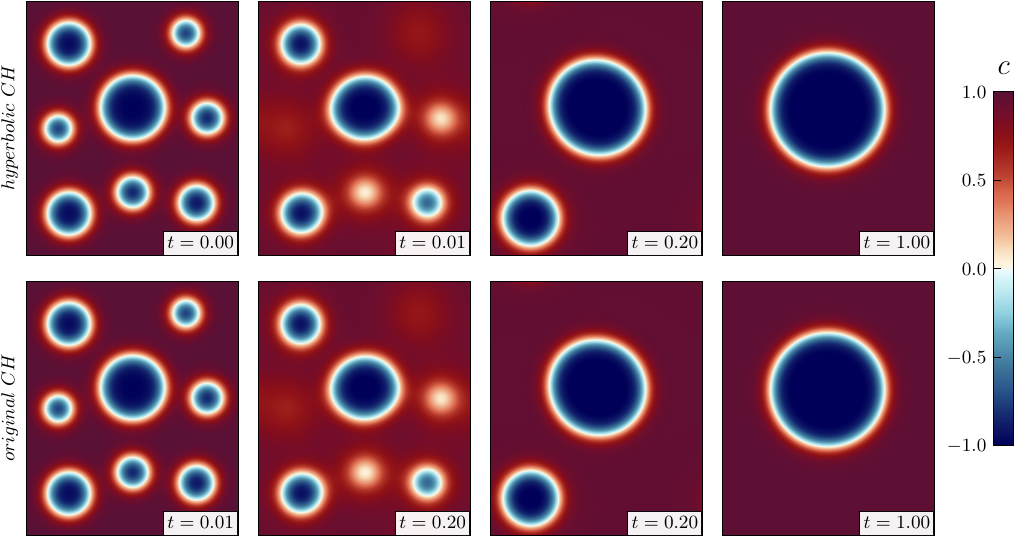}
	\caption{Comparison of the numerical solutions for hyperbolic Cahn-Hilliard model \eqref{eq:CH_Hyp} (Top) with the reference numerical solution obtained for the original model \eqref{eq:CH1} (Bottom) for the 2D Ostwald Ripening test case at times $t=\{0.00,0.01,0.20,1.00\}$. }  
	\label{fig:OR2D}
\end{figure} 
A comparison of the numerical results for both models, at several times, is depicted in Figure \ref{fig:OR2D}. The figure shows excellent qualitative agreement between both numerical solutions, for the intermediate stages as well as for the reached equilibrium state. The dynamics of the solutions are well-captured at the correct times and the 2D shape of the dispersed phase regions comply with the reference solution. In order to further quantify the quality of the approximation, we also provide 1D plots over two horizontal cuts of the solution at $y=0$ and $y=0.4$ in figure \ref{fig:OR2D_cut}. 
The comparison shows that the obtained numerical solution for the augmented hyperbolic relaxation system \eqref{eq:CH_Hyp} matches  perfectly with the reference numerical solution obtained for the original model \eqref{eq:CH1} also quantitatively for the selected times, which best showcase the main solution dynamics. 

\begin{figure}[H]
	\centering
	\includegraphics[width=\textwidth]{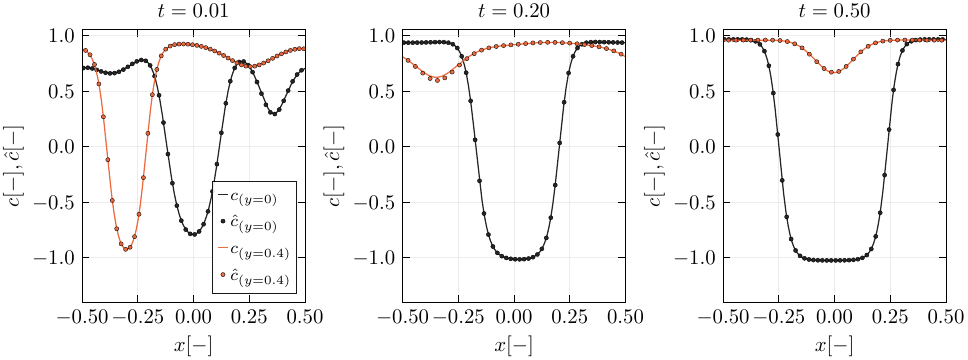}
	\caption{Comparison of the numerical solutions for hyperbolic Cahn-Hilliard model (continuous lines) with the reference numerical solution obtained for the original model \eqref{eq:CH1} (dots) for the 2D Ostwald Ripening test case at times $t=\{0.01,0.20,0.50\}$ over the lines $y=0$ (red) and $y=0.4$ (black).}  
	\label{fig:OR2D_cut}
\end{figure}

\section{Conclusion}
\label{sec:conclusion}
In this work, we have presented a new first order hyperbolic relaxation model that is able to successfully approximate the Cahn-Hilliard equation, which contains spatial derivatives of order up to four. The former was shown to by hyperbolic, to admit a Lyapunov functional and to conserve the total mass, as for the original equation.
A novel scheme is also proposed to solve the original Cahn-Hilliard equation based on semi-implicit conservative finite differences in one and two dimensions of space. 
The numerical results for both the proposed hyperbolic model and the original one, show excellent agreement, both qualitatively and quantitatively, in one and multiple space dimensions, even in the presence of fast dynamics and strong topological changes. 
A natural extension of this work would be to extend it to the description of phase transitions in continua \cite{lowengrub1998quasi,mulet2024implicit}. Additional optimization of the numerical method for the hyperbolic model could further improve its performance, for instance through a semi-implicit discretization. 

\section*{Acknowledgements}
This research was funded by the Italian Ministry of Education, University and Research (MIUR) in the frame of the Departments of Excellence Initiative 2018--2027 attributed to DICAM of the University of Trento (grant L. 232/2016) and via the PRIN 2022 project \textit{High order structure-preserving semi-implicit schemes for hyperbolic equations}.  
FD was also funded by NextGenerationEU, Azione 247 MUR Young Researchers – SoE line. 
MD was also funded by the European Union NextGenerationEU project PNRR Spoke 7 CN HPC.  
Views and opinions expressed are however those of the author(s) only and
do not necessarily reflect those of the European Union or the European Research
Council. Neither the European Union nor the granting authority can be held
responsible for them. 
FD and MD are members of the Gruppo Nazionale Calcolo Scientifico-Istituto Nazionale di Alta Matematica (GNCS-INdAM). FD would like to acknowledge support from the CINECA under the ISCRA initiative, for the availability of high-performance computing resources and support (project number IsB27\_NeMesiS). SG would like to thank Alain Miranville for helpful discussions.

\bibliographystyle{mystyle}
\bibliography{./references}

\end{document}